\documentclass[11pt]{amsart}




\usepackage{DKstyle}
\usepackage{bm}
\usepackage{float}
\usepackage[toc,page]{appendix}

\newcommand{\vertiii}[1]{{\left\vert\kern-0.25ex\left\vert\kern-0.25ex\left\vert #1
    \right\vert\kern-0.25ex\right\vert\kern-0.25ex\right\vert}}
 \makeatletter
\newcommand*{\rom}[1]{\expandafter\@slowromancap\romannumeral #1@}
\makeatother
\theoremstyle{plain}




\usepackage{caption}
\usepackage[labelfont=rm]{subcaption}

\usepackage[pagebackref=false, colorlinks]{hyperref}

\hypersetup{pdffitwindow=true,linkcolor=blue,citecolor=blue,urlcolor=blue,menucolor=blue}

\usepackage{comment}

\subjclass{}%
\keywords{}%

\date{\today}%
\dedicatory{}%
\commby{}%

\title{Counting rational points on the sphere with bounded denominator}
\author{Dubi Kelmer}
\address{Department of Mathematics, Boston College, Chestnut Hill MA 02467-3806, USA}
\email{kelmer@bc.edu}

\thanks{This work was partially supported by NSF CAREER grant DMS-1651563.}
\begin{document}
\begin{abstract}
We give an optimal bound for the remainder when counting the number of rational points on the $n$-dimensional sphere with bounded denominator for any $n\geq 2$.
\end{abstract}
\maketitle


\section{Introduction}
For any $n\in \N$ let $S^n=\{\bm{x}\in \R^{n+1}: \|\bm{x}\|=1\}$, denote the unit sphere in $\R^{n+1}$ where $\|\bm{x}\|=\sqrt{x_1^2+\ldots+x_n^2}$ is the Euclidian norm. 
We are interested in the counting function 
$$N(S^n;T)=\#\{\frac{\bm{p}}{q}\in S^n: q\leq T\}$$
counting the number of rational points on the sphere with bounded denominator, where we assume $\frac{\bm{p}}{q}$ is written in lowest terms.
Similarly, for any subset $\Omega\subset S^n$ we let 
$$N(\Omega; T)=\#\{\frac{\bm{p}}{q}\in \Omega: q\leq T\}.$$
Asymptotically  $N(S^n;T)\sim c_n T^n$ for some $c_n>0$, and more generally $N(\Omega; T)\sim c_n\mu_n(\Omega)T^n$ as $T\to\infty$ where $\mu_n$ is the rotation invariant probability measure on $S^n$. There are many methods for obtaining such estimates, including the circle method \cite{HeathBrown1996, Getz2018}, harmonic analysis on groups 
\cite{DukeRudnickSarnak1993, GorodnikNevo2012, BenoistOh2012}, analysis of $L$-functions of modular forms \cite{Duke2003,BurrinGrobner2024}, and the light cone Siegel transform \cite{KelmerYu2023a, KelmerYu2023b}; each attacking the problem from a different viewpoint and allows for different generalizations. 

Recently, in \cite{BurrinGrobner2024}, using the well known interpretation of sum of squares function as a Fourier coefficient of a modular form, building on the ideas in \cite{Duke2003} and relating these counting functions to contour integrals of certain $L$-functions attached to modular forms, the authors obtain effective bounds for the remainder: showing that 
$$N(S^n;T)=c_nT^n(1+O_\epsilon(T^{-\frac{n}{n+1}+\epsilon})),$$
improving on the results of \cite{KelmerYu2023a}.  Moreover, they showed that, for even $n$ and assuming GRH, one can sharpen the remainder term further to $O_\epsilon(T^{-\frac{n+1}{n+2}+\epsilon})$. They also showed that for even $n$ and  $\Omega$ with piecewise smooth boundary
$$N(\Omega;T)=c_n\mu_n(\Omega)T^n(1+O_{\Omega}(T^{-\frac{n}{2n+2}+\epsilon})),$$ 
thus improving on exponent previously obtained in \cite{KelmerYu2023b}. Here and below we use the notation $f(T)=O(g(T))$ and also $f(T)\ll g(T)$, if there is a constant $c$ such that $|f(T)|\leq cg(T)$ for all $T$; we use subscripts to indicate the dependence of the constant on additional parameters. 

The goal of this note is to show that it is possible to replace the argument involving contour integration of $L$-functions of modular forms with a different argument just using bounds on Fourier coefficients of cusp forms together with  an elementary estimate involving sums of twisted divisor functions. Moreover, for estimating $N(S^m;T)$ for $n\geq 2$, this approach gives a sharper (and essentially optimal) bound for the remainder\footnote{I thank Zeev Rudnick for pointing out that such an approach should be possible for this problem.}. Explicitly we show the following.
\begin{thm}\label{t:main}
For any $n\geq 2$ there is a constant $c_n>0$  and $\alpha_n\in \{0,1,3\}$ such that 
$$N(S^n;T)=c_nT^n(1+O(\frac{(\log T)^{\alpha_n}}{T})),$$
where $\alpha_n=0$ for odd $n\geq 3$, $\alpha_n=1$ for even $n\geq 4$ , $\alpha_3=1$ and $\alpha_2=3$. \end{thm}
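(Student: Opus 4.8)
The plan is to parametrize rational points on $S^n$ by their denominators and count, for each $q \le T$, the number of primitive vectors $\bm{p} \in \Z^{n+1}$ with $\|\bm{p}\|^2 = q^2$. Writing $r_{n+1}(m)$ for the number of representations of $m$ as a sum of $n+1$ squares, the key arithmetic input is that rational points $\bm{p}/q$ in lowest terms on $S^n$ correspond to representations of $q^2$ as a sum of $n+1$ squares that are primitive (i.e. $\gcd(p_1,\dots,p_{n+1},q)=1$). After a M\"obius inversion to remove the non-primitive representations, the counting function becomes
\begin{equation*}
N(S^n;T) = \sum_{q \le T} \sum_{d \mid q} \mu(d)\, r_{n+1}\!\left(\tfrac{q^2}{d^2}\right),
\end{equation*}
so the whole problem reduces to understanding the summatory behavior of $r_{n+1}$ along squares.

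First I would invoke the classical fact that $r_{n+1}(m)$, viewed as the $m$-th Fourier coefficient of the theta series $\theta^{n+1}$, decomposes into an Eisenstein part and a cuspidal part. The Eisenstein coefficient is an explicit arithmetic function — essentially a (possibly twisted) divisor-type function of the form $\sigma_{(n-1)/2}$ up to elementary correction factors — and it produces the main term $c_n T^n$ together with the predicted $(\log T)^{\alpha_n}$ secondary terms. This is where the three cases for $\alpha_n$ arise: the exponent in the divisor sum and the parity-dependent arithmetic of sums of squares control whether a logarithm (or a power of a logarithm) appears when one sums the Eisenstein coefficients over $q^2 \le T^2$. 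The promised ``elementary estimate involving sums of twisted divisor functions'' is exactly the evaluation of $\sum_{q \le T} E\!\left(q^2\right)$ for the Eisenstein coefficient $E$, which I would carry out by interchanging the order of summation and estimating the resulting inner sums, tracking the logarithmic factors carefully in the small-dimension cases $n = 2, 3$.

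The cuspidal contribution is controlled purely by bounds on Fourier coefficients of cusp forms: for the cusp form of weight $(n+1)/2$ appearing in the decomposition, the $m$-th coefficient is $O_\epsilon(m^{(n-1)/4+\epsilon})$ by Deligne (for integral weight) or its half-integral analogue via the Shimura lift and subconvexity-free trivial bounds. Summing $|a(q^2)|$ over $q \le T$ then contributes an error of order $O_\epsilon(T^{(n+1)/2+\epsilon})$, which is a genuinely smaller power of $T$ than the main term $T^n$ precisely when $n \ge 2$, and comfortably absorbed into the claimed $O(T^{n-1}(\log T)^{\alpha_n})$ remainder. This is the step that replaces the contour-integration argument of \cite{BurrinGrobner2024}: rather than shifting contours of an $L$-function, one simply bounds coefficients term by term, at the cost of needing the main term extracted separately from the Eisenstein part.

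The main obstacle I anticipate is the precise bookkeeping of the Eisenstein main term across parities of $n$ and the two exceptional small dimensions. For odd $n \ge 3$ the relevant divisor exponent is a positive integer and the sum $\sum_{q\le T} \sigma_{(n-1)/2}(q^2)$ is a clean polynomial with no logarithmic loss, giving $\alpha_n = 0$; for even $n \ge 4$ the half-integral weight forces a boundary case that produces a single logarithm; and the cases $n=2,3$ require extra care because the divisor function is $\sigma_0 = \tau$ or $\sigma_{1/2}$-type, where the summatory function $\sum_{q \le T}\tau(q^2)$ carries the well-known $(\log T)^3$ behavior (from $\tau(q^2)$ having average order a constant times $(\log q)^2$, summed to $T$). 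Getting the constant $c_n$ and the sharp power $\alpha_2 = 3$, $\alpha_3 = 1$ correct — rather than merely an upper bound — is the delicate part, and I would verify it by an explicit Dirichlet-series computation of $\sum_m r_{n+1}^{\mathrm{Eis}}(m^2) m^{-s}$ and a Perron/Tauberian or direct elementary extraction of the leading and secondary terms.
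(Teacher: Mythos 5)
Your overall strategy --- the M\"obius reduction to $\sum_{q\le T} r_{n+1}(q^2)$, the Eisenstein/cusp decomposition of $\Theta_{n+1}$, Deligne combined with the Shimura lift at squares for the cuspidal part, and an elementary hyperbola-method evaluation of twisted divisor sums for the Eisenstein part --- is exactly the paper's. The genuine gap is in the small dimensions. You assert that the cuspidal contribution $O_\epsilon(T^{(n+1)/2+\epsilon})$ is ``comfortably absorbed'' into the claimed remainder for all $n\ge 2$; this fails precisely for $n=2$ and $n=3$, where $T^{3/2+\epsilon}$ vastly exceeds the claimed $O(T\log^3 T)$ and $T^{2+\epsilon}$ exceeds $O(T^2\log T)$. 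The theorem in these cases holds only because $\cS_{3/2}(4,\chi)$ and $\cS_2(4)$ are trivial, so that $\Theta_3$ and $\Theta_4$ are pure Eisenstein series; the paper makes this explicit (for $n=2$ it works directly from Hurwitz's formula for $\sum_q r_3(q^2)q^{-s}$, and it notes that for general ellipsoids with nontrivial $\cS_{3/2}$ the error genuinely degrades to $O(T^{3/2})$). Relatedly, the Shimura-lift bound you invoke for half-integral weight requires weight at least $5/2$, so it is not even available at weight $3/2$; without the vanishing of the cusp space your argument cannot reach the stated exponent for $n=2$.

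A smaller but real error is your explanation of $\alpha_2=3$ via the claim that $\sum_{q\le T}\sigma_0(q^2)$ has $(\log T)^3$ behavior. The Eisenstein coefficient $r_3(q^2)$ is not $\sigma_0(q^2)$ but the weighted divisor sum $6\sum_{abc=q}\mu(c)\omega_{-1}(c)\chi_0(b)\,b$, whose average order is $\asymp q$; the main term is a clean $cT^2$ with no logarithms, and all three logarithms live in the error term --- two from the hyperbola-method remainders summed against $1/c$, and one more from the M\"obius inversion step $\sum_{d\le T}d^{-(n-1)}$, which converges for $n\ge 3$ but contributes a $\log T$ when $n=2$. Likewise $\alpha_3=1$ comes from the hyperbola-method error $O(X^2\log X)$ at integral divisor exponent $1$, not from a ``$\sigma_{1/2}$-type'' sum. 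A Perron or Tauberian extraction of the main term alone, as you propose, would not produce these logarithmic factors; they must be tracked through the elementary error analysis.
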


\begin{rem}
Noting that for $T\in \N$ we have that $N(S^n,T)=N(S^n, T+\frac12)$ we see that, up to the factors of $\log(T)$, this bound is sharp.
\end{rem}

\begin{rem}
By following the calculation carefully it is possible to express $c_n$ explicitly as a ratio of special values of Dirichlet $L$-functions. However, for the sake of brevity, and since the value of $c_n$ was already calculated explicitly in \cite{KelmerYu2023a}, we will not do this here.
\end{rem}

\begin{rem}
The same method can also be used to estimate  $N(\Omega;T)$ for $\Omega\subset S^n$  and give a power saving error term for the remainder (see \secref{s:NOmega}). However, in this case the result is not as good as the one obtained in \cite{BurrinGrobner2024}, so we will not include the full details of this calculation. 
\end{rem}

\begin{rem}\label{r:SQ}
In  \cite{BurrinGrobner2024}, the authors considered rational point on more general ellipsoids 
$$S_Q=\{x\in \R^{n+1}: Q(x)=1\},$$ 
with $Q(x)=\frac{1}{2}x^tAx$ with $A$ an integral symmetric positive definite matrix such that there is $N=4D$ with $D$ an odd square free integer, such that   $NA^{-1}$is integral with even diagonal entries. We note that the methods of this paper also generalize to this setting when $n\geq 3$ (see \secref{s:SQ}).
\end{rem}

\section{Preliminaries}
We start with some basic notation and definitions from the theory of modular forms. 
Let $\Gamma=\SL_2(\Z)$ acting on the the upper half plane
$\bH=\{z=x+iy:y>0\}$ by linear fractional transformation. For any $N\in \N$ we denote by 
$$\Gamma_0(N)=\{\begin{pmatrix} a& b\\ c&d\end{pmatrix}: c\equiv 0\pmod{N}\}.$$

\subsection{Theta functions}
For any $n\in \N$ the standard Theta function is defined by  
$$\Theta_n(z)=\sum_{\bm{v}\in \Z^n} e^{2\pi i \|\bm{v}\|^2 z}=\sum_{\ell =1}^\infty r_n(\ell)e^{2\pi i \ell z}$$

The function $\Theta_n(z)$ is holomorphic  on the upper half plane and satisfies that for any $\gamma=\begin{pmatrix} a& b\\ c&d\end{pmatrix}\in\Gamma_0(4)$ 
$$\Theta_n(\gamma z)=\nu^n_\theta(\gamma)(cz+d)^{n/2}\Theta_n(z),$$
where 
$\nu_\theta(\gamma)=\bar\epsilon_d (\frac{c}{d})$ is the multiplier system for the Theta function with 
$$\epsilon_d=\left\lbrace \begin{array}{cc}1 & d\equiv 1\pmod{4}\\
i & d\equiv -1\pmod{4}\end{array}\right.,$$
and $(\frac{c}{d})$ is Shimura's extention of the Jacobi symbol, defined for negative odd $d\in \Z$ by 
$(\frac{c}{d})=\frac{c}{|c|}(\frac{c}{-d})$
for $c\neq 0$ and $(\frac{0}{d})=0$ unless $d=\pm1$ in which case $(\frac{0}{d})=1$ (see \cite[Corollary 10.7]{Iwaniec1997book}) . 

We note for future reference that for any integer  $m\neq 0$ the function $\chi(d)=(\frac{m}{d})$, is a Dirichlet character modulo $4|m|$, and we denote by $\omega_m$ the unique primitive Dirichlet character satisfying that 
\begin{equation}\label{e:omegam}
\omega_m(d)=\left(\frac{m}{d}\right),\quad \forall d\in \Z,\; (d,4m)=1.\end{equation}
Note that $\omega_m$ depends only on the squarefree part of $m$.

\subsection{The space of modular forms of integral and half integral weight}
For any $k\in \N$ and $\chi$ a Dirichlet character modulo $N$ the space $M_k(N,\chi)$ of modular forms of weight $k$ and Nebentypus $\chi$ for $\Gamma_0(N)$  is the space of all holomorphic function on $\bH\cup \Q\cup \infty$ satisfying that 
$f(\gamma z)=\chi(\gamma)(cz+d)^kf(z)$ for all $\gamma\in \Gamma_0(N)$, where the character $\chi$ is extended to $\Gamma_0(N)$ via $\chi(\gamma)=\chi(d)$.

For any $k\in \tfrac{1}{2}+\N$, and $N\in \N$ with $4|N$ and $\chi$ a Dirichlet character modulo $N$, the space $M_k(N,\chi)$ of modular forms of weight $k$ for $\Gamma_0(N)$  is the space of all holomorphic function on $\bH\cup \Q\cup \infty$ satisfying that 
$f(\gamma z)=\nu_\theta(\gamma)^{2k}\chi(\gamma)(cz+d)^kf(z)$ for all $\gamma\in \Gamma_0(N)$. 

In both cases the space $M_k(N,\chi)$ is finite dimensional and can be decomposed as a direct sum
$$M_k(N,\chi)=\cE_k(N,\chi)\oplus \cS_k(N,\chi)$$
with $\cS_k(N,\chi)$ the space of cusp forms vanishing at all cusps, and $\cE_k(N,\chi)$ its orthogonal complement with respect to the Petersson Inner Product. The space $\cE_k(N,\chi)$ is spanned by Eisenstein series that can be explicitly calculated.

In particular, we see that the Theta function $\Theta_n\in M_{\frac{n}{2}}(4,\chi)$, where $\chi$ is the principal character if $n$ is odd or if $n\equiv 0\pmod{4}$ and $\chi=\omega_{-1}$ is the quadratic character to modulus $4$, if $n\equiv 2\pmod{4}$.  We can thus decompose
$$\Theta_n=E+f$$
with $E\in \cE_{\frac{n}{2}}(4,\chi)$ a sum of Eisenstein series and $f\in \cS_{\frac{n}{2}}(4,\chi)$ a cusp form.

\subsection{Fourier coefficients of cusp forms}\label{s:Eisenstein}
For any $f\in \cS_k(N,\chi)$ we can expand 
$$f(z)=\sum_{n=1}^\infty a_f(n)e^{2\pi i n z},$$
with $a_f(n)$ the $n$'th Fourier coefficient of $f$ at the cusp at $\infty$. 
For integral $k\geq 2$ the Ramanujan-Petersson bound, proved by Deligne \cite{Deligne1974}, implies that   
\begin{equation}\label{e:Deligne}
|a_f(n)|\ll_f \sigma_0(n)n^{\frac{k-1}{2}},
\end{equation}
with $\sigma_0(n)=\sum_{d|n}1$ is the number of divisors of $n$ (and the implied constant is absolute if $f$ is a primitive Hecke newform).

For $k$ a half integer, while the Ramanujan-Petersson bound is not known in general we have Hecke's bound 
$$|a_f(n)|\ll n^{\frac{k}{2}},$$
which holds in general. Moreover, assuming $k\geq \frac{5}{2}$, when evaluating at perfect squares the Shimura correspondence \cite{Shimura1973} implies that 
$$a_f(n^2)=\sum_{d|n}\mu(d)d^{k-3/2} a_{\tilde{f}}(\tfrac{n}{d}),$$
where $\tilde{f}\in S_{2k-1}(\tfrac{N}{2},\chi^2)$ is the Shimura lift of $f$. Here $\mu(d)$ denotes the M\"obius function, which is defined as the unique multiplicative function supported on square free integers and satisfying that $\mu(p)=(-1)$ for any prime $p$. Applying Deligne's bound \eqref{e:Deligne} for $a_{\tilde{f}}$ we get the following bound for Fourier coefficients of $f\in S_k(N,\chi)$ at perfect squares:
\begin{equation}\label{e:Shimura}
|a_f(n^2)|\ll_f \sum_{d|n}|\mu(d)| d^{k-3/2} \sigma_0(\tfrac{n}{d}) (\frac{n}{d})^{k-1}. 
\end{equation}
\subsection{Fourier coefficients of Eisenstein series}
For integral weight $k\geq 3$ and a character $\chi$ modulo $N$,  a basis for the space of $\cE_k(N,\chi)$ composed of Eisenstein series $G_k(\chi_1,\chi_2,z)$,  parametrized by pairs of Dirichlet characters $\chi_1$ modulo $N_1$ and $\chi_2$ modulo $N_2$ with $N=N_1N_2$, $\chi=\chi_1\chi_2$ and $\chi_1$ primitive, was given in \cite[Theorem 8.5.17]{CohenStromberg2017}.  Moreover,  by \cite[Theorem 8.5.5]{CohenStromberg2017}  the $n$th Fourier coefficient of $G_k(\chi_1,\chi_2,\tau)$ is  given by some constant multiple of the  twisted divisor sums 
$\sigma_{k-1}(\chi_1,\chi_2,n)$
where
\begin{equation}\label{e:divisor1}
\sigma_s(\chi_1,\chi_2,n)=\sum_{d|n} \chi_1(d)\chi_2(\tfrac{n}{d})d^s.
\end{equation}

For half integral weight $k\in \tfrac{1}{2}+\N$ with $k\geq 5/2$ for any square free odd $D$ and $m\in \Z$ with $m|D$, a basis for $\cE_k(4D,\omega_m)$ comprised of Eisenstein series with an explicit formula for their Fourier coefficients was given in \cite[Theorem 7.1]{WangPei2012}. The Fourier coefficient at $n\in \N$  is given by a product of a divisor sum and a special value of the Dirichlet $L$-function $L(k-\frac{1}{2},\omega_{\pm n})$. In particular, when  $n$ is a perfect square these $L$-values are independent of $n$ and it is possible to express the Fourier coefficients as a modified divisor sums. 

To simplify the notation we will describe their results for the special case of $D=1$  (which is all we will need for our result). 
Let $s_k=(-1)^{k-1/2}$ and for any odd $m\in \N$ consider the modified divisor sum
\begin{equation}\label{e:divisor2}
\beta_k(m)=m^{2k-2}\sum_{ab| m}\mu(a)\left(\frac{s_k}{a}\right) a^{1/2-k}b^{2-2k}.
\end{equation}
Then the space $\cE_k(4)$ has a basis of Eisenstein series, each has a Fourier expansion of the form
$$G_k(z)=\sum_{n=0}^\infty \lambda_k(n)e^{2\pi i nz},$$
where $\lambda_k(0)$ can be  either $0$ or $1$  and for $n=2^\nu m$ with $m$ odd we have that 
\begin{equation}\label{e: lambdak}
\lambda_k(n^2)=c_k 2^{2\nu(k-1)} A_k(\nu)\beta_k(m),
\end{equation}
with $c_k$ a constant independent of $n$ and either $A_k(\nu)=1$ for all $\nu$  or $A_k(\nu)$ is a rational function of $2^\nu$  satisfying $|A_k(\nu)|\leq 1$.
  
 
 \subsection{Dirichlet L-values}
For any Dirichlet character $\chi$ modulo $N$, and $s>1$ we let 
$$L(\chi,s)=\sum_n \frac{\chi(n)}{n^s}=\prod_p(1-\chi(p)p^{-s})^{-1}.$$
In particular the zeta function $\zeta(s)=L(1,s)$ for $\chi=1$ trivial.
While these $L$-function have an analytic continuation for $s\in \C$ we will only need to consider real values $s>1$ where the series absolutely converges.
For another integer $M\in \N$ we denote by 
$$L_M(\chi,s)=\sum_{(n,M)=1} \frac{\chi(n)}{n^s}=\prod_{p\not | M}(1-\chi(p)p^{-s})^{-1}.$$
We denote by $\mu(n)$ the M\"obius function.
Using the Euler product expansion it is not hard to see that 
$$\frac{1}{L(\chi,s)}=\sum_n \frac{\mu(n)\chi(n)}{n^s}=\prod_p(1-\chi(p)p^{-s})$$
and 
$$\frac{L(\chi,s)}{L(\chi^2,2s)}=\sum_n \frac{|\mu(n)|\chi(n)}{n^s}=\prod_p(1+\chi(p)p^{-s}).$$

\section{Proof of main result }
\subsection{Reduction to  Fourier coefficients of $\Theta_{n+1}$}
For a modular form $f\in M_k(N,\chi)$ with Fourier coefficient $a_f(m)$ consider the counting function 
$$N_k(f;T)=\sum_{ q\leq T} a_f(q^2).$$
 Recall that  
 $$r_{n+1}(m)=\#\{\bm{v} \in \Z^{n+1}: \|\bm{v}\|^2=m\},$$ is the $m$'th Fourier coefficient of $\Theta_{n+1}(z)$ which is a modular form in $M_k(4,\chi)$ with $k=\frac{n+1}{2}$ and $\chi$ a suitable real Dirichlet character. We thus have the counting function 
 $$N_k(\Theta_{n+1};T)=\sum_{q\leq T} r_{n+1}(q^2).$$ 
We show the following
\begin{lem}\label{l:N2N*}
For any $\alpha\geq 0$ and any $n\geq 3$ 
$$N(S^n;T)=c_nT^n+O(T^{n-1}\log^\alpha(T)) \quad \Leftrightarrow \quad N_k(\Theta_{n+1};T)=c^*_nT^n+O(T^{n-1}\log^\alpha(T)),$$ 
with  $c_n^*=\zeta(n)c_n$. For $n=2$ the estimate $N_k(\Theta_{3};T)=c^*_2T^2+O(T\log^\alpha(T))$ implies that 
$N(S^2;T)=c_2T^2+O(T\log^{\alpha+1}(T))$ (and similarly in the other direction).

\end{lem}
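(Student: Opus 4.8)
The plan is to reinterpret both counting functions as counts of integer lattice points lying on spheres of integer radius, and then to relate the ``primitive'' count to the ``total'' count by a M\"obius inversion. First I would observe that a rational point $\bm{p}/q\in S^n$ written in lowest terms is the same datum as a primitive vector $\bm{p}\in\Z^{n+1}$ with $\|\bm{p}\|=q\in\N$: indeed $\|\bm{p}/q\|=1$ forces $\|\bm{p}\|^2=q^2$, so $q=\|\bm{p}\|$, and the lowest-terms condition is equivalent to $\gcd(p_1,\dots,p_{n+1})=1$. Hence $N(S^n;T)$ counts primitive $\bm{p}\in\Z^{n+1}$ with $\|\bm{p}\|\in\N$ and $\|\bm{p}\|\le T$. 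On the other side, $N_k(\Theta_{n+1};T)=\sum_{q\le T}r_{n+1}(q^2)$ counts \emph{all} nonzero $\bm{v}\in\Z^{n+1}$ with $\|\bm{v}\|\in\N$ and $\|\bm{v}\|\le T$.

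Next I would write each such $\bm{v}$ uniquely as $\bm{v}=d\bm{p}$ with $d=\gcd(v_1,\dots,v_{n+1})\in\N$ and $\bm{p}$ primitive. The key elementary point is that $\|\bm{v}\|\in\N$ forces $\|\bm{p}\|\in\N$: writing $m=\|\bm{p}\|^2\in\N$ we have $\|\bm{v}\|^2=d^2m$ a perfect square, and since a rational number whose square is an integer is itself an integer, $m$ must be a perfect square. With this, the decomposition $\bm{v}\mapsto(d,\bm{p})$ sets up a bijection between the vectors counted by $N_k(\Theta_{n+1};T)$ and pairs $(d,\bm{p})$ with $d\ge1$ and $\bm{p}$ primitive of integer norm $\le T/d$, yielding the exact identity
\begin{equation*}
N_k(\Theta_{n+1};T)=\sum_{d\ge1}N(S^n;T/d),
\end{equation*}
a finite sum since $N(S^n;X)=0$ for $X<1$. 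M\"obius inversion then gives $N(S^n;T)=\sum_{d\ge1}\mu(d)N_k(\Theta_{n+1};T/d)$.

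To pass from the asymptotic for $N_k$ to the one for $N(S^n;\cdot)$ (and symmetrically, using the summation identity in place of its inverse for the reverse implication), I would substitute $N_k(\Theta_{n+1};X)=c_n^*X^n+O(X^{n-1}\log^\alpha X)$ into the inverted sum. The main term becomes $c_n^*T^n\sum_{d\le T}\mu(d)d^{-n}=c_n^*T^n(\zeta(n)^{-1}+O(T^{1-n}))=c_nT^n+O(T)$, where I use $c_n^*=\zeta(n)c_n$ together with the tail bound $\sum_{d>T}d^{-n}\ll T^{1-n}$ valid for $n\ge2$. The error term is controlled by
\begin{equation*}
\sum_{d\le T}(T/d)^{n-1}\log^\alpha(T/d)\ll T^{n-1}\log^\alpha T\sum_{d\le T}d^{-(n-1)},
\end{equation*}
and for $n\ge3$ the remaining sum converges, giving $O(T^{n-1}\log^\alpha T)$ as claimed; this is the generic case, and the $O(T)$ from the main term is absorbed since $n-1\ge1$.

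The only place where the argument is not completely automatic is the boundary exponent $n=2$, where $\sum_{d\le T}d^{-(n-1)}=\sum_{d\le T}d^{-1}\ll\log T$ diverges logarithmically, producing the extra factor and hence the bound $O(T\log^{\alpha+1}T)$; this accounts for the separate, weaker statement recorded for $n=2$. I expect this $n=2$ divergence, together with the elementary observation that primitivity forces the rescaled vector to retain integer norm, to be the only real content; everything else is bookkeeping on convergent Dirichlet series.
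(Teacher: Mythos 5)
Your proposal is correct and follows essentially the same route as the paper: the identity $N_k(\Theta_{n+1};T)=\sum_{d\ge 1}N(S^n;T/d)$ via extracting the gcd, M\"obius inversion for the converse, and the convergence of $\sum d^{-(n-1)}$ for $n\ge 3$ versus its logarithmic divergence at $n=2$. The only cosmetic difference is that you take the gcd of the $n+1$ coordinates of $\bm{p}$ while the paper uses $\gcd(\bm{p},q)$, but these coincide since $\|\bm{p}\|^2=q^2$ forces $\gcd(\bm{p})\mid q$.
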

\begin{proof}
We can write 
\begin{eqnarray*}N_k(\Theta_{n+1};T)&=&\#\{(\bm{p},q)\in\Z^{n+2}: \|\bm{p}\|^2=q^2,\; q\leq T\}\\
&=& \sum_{d=1}^{[T]}\#\{(\bm{p},q)\in\Z^{n+2}: \|\bm{p}\|^2=q^2,\; q\leq T, \gcd(\bm{p},q)=d\}\\
&=& \sum_{d=1}^{[T]}\#\{(\bm{p},q)\in\Z^{n+2}: \|\bm{p}\|^2=q^2,\; q\leq \frac{T}{d}, \gcd(\bm{p},q)=1\}\\
&=& \sum_{d=1}^{[T]} N(S^n; \frac{T}{d})
\end{eqnarray*}
Conversely, using M\"obius inversion.
\begin{eqnarray*}
\sum_{d=1}^{[T]} \mu(d)N_k(\Theta_{n+1};\frac{T}{d})&=& \sum_{d=1}^{[T]}\mu(d)\sum_{e=1}^{[\frac{T}{d}]} N(S^n;\frac{T}{de})\\
&=& \sum_{m=1}^{[T]}\sum_{d|m}\mu(d)N(S^n;\frac{T}{m})=N(S^n;T).
\end{eqnarray*}
The result now follows after noting that for any $n\geq 2$ we have  $\sum_{d=1}^\infty \frac{\mu(d)}{d^n}=\frac{1}{\zeta(n)}$ and that $\sum_{d=1}^\infty \frac{1}{d^{n-1}}$ converges for $n\geq 3$ and $\sum_{d=1}^T \frac{1}{d}\ll \log(T)$. 
\end{proof}

Next, to evaluate $N_k(\Theta_{n+1};T)$ decompose 
$$\Theta_{n+1}(z)=E(z)+f(z)$$
with $E\in \cE_{\frac{n+1}{2}}(4,\chi)$ a sum of Eisenstein series, and $f\in \cS_{\frac{n+1}{2}}(4,\chi)$ a cusp form.
Hence the Fourier coefficients $r_{n+1}(m)$ can be written as a sum of a Fourier coefficients of an Eisenstein series and  a Fourier coefficient of a cusp form, consequently the counting function
$$N_k(\Theta_{n+1},T)=N_k(E;T)+N_k(f;T).$$
Next we will bound the contribution of the cusp forms and give an asymptotic estimates to that for Eisenstein series.

\subsection{Bounding contribution of cusp forms}
The following estimates will show that the contribution of cusp forms  is negligible.
 \begin{prop}\label{p:Ncusp}
 For any $k\geq 2$ integral or half integral, for any cusp form  $f\in S_k(N,\chi)$ 
$$N_k(f;T)\ll_k    \|f\|_2T^k\log^\alpha(T),$$
with $\alpha=2$ when $k\in \N$ and $\alpha=1$ when $k\in \N+\frac12$.
\end{prop}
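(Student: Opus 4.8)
The plan is to reduce the statement to the case of a single Hecke eigenform, where the Fourier coefficients at squares are controlled by Deligne's bound \eqref{e:Deligne} (for integral $k$) or by the Shimura-type bound \eqref{e:Shimura} (for half-integral $k$), and then to estimate the resulting sum of (twisted) divisor functions directly. The dependence on $\|f\|_2$ will come entirely from linear algebra on the finite-dimensional space $S_k(N,\chi)$. Concretely, I would fix a basis $h_1,\dots,h_m$ of $S_k(N,\chi)$ consisting of Hecke eigenforms: for integral $k$ the newforms of level dividing $N$ together with their shifts $g(dz)$, and for half-integral $k\ge\tfrac52$ a basis of eigenforms of the operators $T_{p^2}$, each admitting a Shimura lift $\tilde h_j\in S_{2k-1}(\tfrac{N}{2},\chi^2)$. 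Writing $f=\sum_j c_j h_j$, linearity gives $N_k(f;T)=\sum_j c_j N_k(h_j;T)$. Since the Gram matrix $\big(\langle h_i,h_j\rangle\big)_{i,j}$ is a fixed positive-definite matrix depending only on $(k,N,\chi)$, its smallest eigenvalue $\lambda>0$ satisfies $\|f\|_2^2\ge\lambda\sum_j|c_j|^2$, whence $\sum_j|c_j|\ll_{k,N}\|f\|_2$. It therefore suffices to prove $N_k(h_j;T)\ll_{k,N}T^k\log^\alpha(T)$ for each fixed eigenform $h_j$, where the implied constant may now depend on $h_j$.

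For integral $k\ge 2$, Deligne's bound applied to the newform underlying $h_j$ gives $|a_{h_j}(q^2)|\ll\sigma_0(q^2)q^{k-1}$; for a shift $g(dz)$ one uses $\sigma_0(q^2/d)\le\sigma_0(q^2)$, valid since $q^2/d\mid q^2$, together with $(q^2/d)^{(k-1)/2}\le q^{k-1}$, to arrive at the same bound. Summing and invoking $\sum_{q\le X}\sigma_0(q^2)\ll X\log^2(X)$ — which follows from the identity $\sum_q\sigma_0(q^2)q^{-s}=\zeta(s)^3/\zeta(2s)$ having a pole of order three at $s=1$ — partial summation yields $\sum_{q\le T}\sigma_0(q^2)q^{k-1}\ll T^k\log^2(T)$, giving $\alpha=2$.

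For half-integral $k\ge\tfrac52$, the bound \eqref{e:Shimura} gives $|a_{h_j}(q^2)|\ll\sum_{d\mid q}|\mu(d)|d^{k-3/2}\sigma_0(q/d)(q/d)^{k-1}$. Writing $q=de$ and interchanging the order of summation,
\[
\sum_{q\le T}|a_{h_j}(q^2)|\ll\sum_{d\le T}|\mu(d)|\,d^{\,k-3/2}\sum_{e\le T/d}\sigma_0(e)\,e^{k-1}.
\]
Since $\sum_{e\le X}\sigma_0(e)\ll X\log(X)$, partial summation gives $\sum_{e\le X}\sigma_0(e)e^{k-1}\ll X^k\log(X)$, so the inner sum contributes $(T/d)^k\log(T)$. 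Because $d^{\,k-3/2}(T/d)^k=T^k d^{-3/2}$, the $d$-sum collapses to $T^k\log(T)\sum_{d\le T}d^{-3/2}$, and as $\sum_d d^{-3/2}$ converges we obtain $\sum_{q\le T}|a_{h_j}(q^2)|\ll T^k\log(T)$, giving $\alpha=1$. The hypothesis $k\ge\tfrac52$ enters only to guarantee that the Shimura correspondence, hence \eqref{e:Shimura}, is available; the exponent $\tfrac32$ securing convergence is independent of $k$.

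I expect the main obstacle to be the first step rather than the divisor-sum estimates: the bounds \eqref{e:Deligne} and \eqref{e:Shimura} are stated with an implied constant depending on the form, so some argument is needed to make the dependence on $f$ explicit and linear in $\|f\|_2$. The eigenbasis/Gram-matrix reduction resolves this cleanly precisely because $S_k(N,\chi)$ is finite-dimensional with $N$ fixed. Once this is in place the two divisor-sum estimates are elementary, the half-integral case being the sharper of the two because the extra factor $d^{\,k-3/2}$ produces the convergent sum $\sum_d d^{-3/2}$ and thereby saves a power of $\log T$.
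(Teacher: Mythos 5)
Your proposal is correct and follows essentially the same route as the paper: bound $a_f(q^2)$ via Deligne's bound \eqref{e:Deligne} in the integral case and the Shimura-lift bound \eqref{e:Shimura} in the half-integral case, estimate the resulting divisor sums to get $T^k\log^2 T$ and $T^k\log T$ respectively, and recover the $\|f\|_2$-dependence from finite-dimensionality of $\cS_k(N,\chi)$. The only cosmetic differences are that the paper uses an orthonormal basis with Cauchy--Schwarz rather than your Hecke eigenbasis and Gram-matrix argument, and evaluates $\sum_{q\le T}\sigma_0(q^2)q^{k-1}$ by the elementary $d=ab^2$ expansion rather than via the Dirichlet series $\zeta(s)^3/\zeta(2s)$; both yield the same bounds.
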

\begin{proof}
For integral $k\geq 2$ using Deligne's bound \eqref{e:Deligne}
we can bound 
\begin{eqnarray*} 
N_k(f;T) \leq  \sum_{q\leq T}|a_f(q^2)| \ll_f \sum_{q=1}^\infty \sigma_0(q^2)q^{k-1}\Psi_T(q)\\
\end{eqnarray*}
where $\Psi_T(q)=1$ for $q\leq T$ and zero otherwise. Expanding $\sigma_0(q^2)$ and noting that any divisor $d|q^2$ can be written in a unique way as $d=ab^2$ with $a$ square free and $q\equiv 0\pmod{ab}$ , changing the order of summation we get 
\begin{eqnarray*} 
N_k(f;T)& \ll_f &\sum_{q} (\sum_{d|q^2} 1)q^{k-1}\Psi_T(q)\\
&=& \sum_{a=1}^\infty |\mu(a)|\sum_{b=1}^\infty\mathop{\sum_{q=1}^\infty}_{ab|q}q^{k-1}\Psi_T(q)\\
&=& \sum_{a\leq T}|\mu(a)|a^{k-1}\sum_{b\leq \frac{T}{a}} b^{k-1}\sum_{c\leq \frac{T}{ab}} c^{k-1}\\
&\leq & T^k \sum_{a\leq T}\frac{|\mu(a)|}{a} \sum_{b\leq \frac{T}{a}} \frac{1}{b}\leq  T^k\log^2(T).
\end{eqnarray*}

Next, for half integral $k\geq \frac{5}{2}$ using the bound \eqref{e:Shimura} for $|a_f(q^2)|$ we can bound  
\begin{eqnarray*} 
N_k(f;T)&\leq &\sum_{q\leq T}|a_f(q^2)|\\
&\ll_f &\sum_{q\leq T} \sum_{a|q}|\mu(a)| a^{k-3/2} \sigma_0(\tfrac{q}{a}) (\frac{q}{a})^{k-1}\\
&=&  \sum_{a\leq T}|\mu(a)| a^{k-3/2} \sum_{b\leq \frac{T}{a}}(\sum_{d|b} 1)b^{k-1}\\
&=& \sum_{a\leq T}|\mu(a)a^{k-3/2}\sum_{d\leq \frac{T}{a}}d^{k-1}(\sum_{b\leq\frac{T}{ad}}b^{k-1})\\
&\leq & \sum_{a\leq T} a^{k-3/2}\sum_{d\leq \frac{T}{a}}d^{k-1}(\frac{T}{ad})^{k}\\
&=&T^k  \sum_{a\leq T}a^{-3/2}\sum_{d\leq \frac{T}{a}}\frac{1}{d} \ll  T^k\log(T)
\end{eqnarray*}

Finally to make the dependence on $f$ explicit fix an orthonormal basis $f_1,\ldots,f_{d_k}$ for the space of cusp forms.
There is a constant, $C_k$, depending only on $k$, such that $N_k(f_j,T)\leq C_kT^k\log(T)$ for all $1\leq j\leq d_k$. Now for any $f\in \cS_k(N,\chi)$ we can expand
$$f=\sum_j \langle f,f_j\rangle f_j.$$
Hence 
$$N_k(f;T)=\sum_j  |\langle f,f_j\rangle N_k(f_j;T)\leq (\sum_j  |\langle f,f_j\rangle) C_kT^k\log(T) $$
Using Cauchy Schwarz we can bound $\sum_j |\langle f,f_j\rangle |\leq  \sqrt{d_k}\|f\|_2,$
concluding the proof.
\end{proof}

\subsection{Sums of twisted divisor functions}
For $k\in \N$ and  two Dirichlet characters $\chi_1,\chi_2$ modulo $N_1$ and $N_2$ we recall the twisted divisor sum $\sigma_k(\chi_1,\chi_2,m)$ given in \eqref{e:divisor1} and consider the following sum 
\begin{equation}\label{e:divisorsum}
\cS_k(\chi_1,\chi_2,T)=\sum_{q\leq T}\sigma_k(\chi_1,\chi_2,q^2).
\end{equation}
For $k\in \frac{1}{2}+\N$ a half integer recall the modified divisor sums $\beta_k(m)$ given in \eqref{e:divisor2} and consider the sum
\begin{equation}\label{e:divisorsum2}
\cB_k(T)=\mathop{\sum_{m\leq T}}_{\rm{odd}} \beta_k(m)
\end{equation}
The main goal of this section is to give an assymptotic estimate for these sums. 
We start with the following general lemma.

\begin{lem}\label{l:sumsquares}
Given completely multiplicative functions $f,g$ and the Dirichlet convolution 
$$h(m)=\sum_{d|m}f(d)g(\frac{m}{d}),$$ we have 
$$\sum_{q\leq T}h(q^2)=\sum_{a\leq T}|\mu(a)| f(a)g(a)\sum_{bc\leq \frac{T}{a}} f^2(b)g^2(c)$$
\end{lem}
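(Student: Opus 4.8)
The plan is to reduce the identity to a pointwise statement about the divisors of $q^2$ and then sum over $q\le T$. The core claim I would isolate first is the pointwise identity
\[
h(q^2)=\sum_{\substack{abc=q\\ a\ \mathrm{squarefree}}} f(a)\,g(a)\,f(b)^2\,g(c)^2,
\]
valid for each fixed $q$. Granting this, the lemma is immediate: summing over $q\le T$ replaces the constraint $abc=q\le T$ by $abc\le T$, and then grouping according to the squarefree parameter $a$ (and writing $\sum_{a\ \mathrm{squarefree}}=\sum_a|\mu(a)|$) produces exactly $\sum_{a\le T}|\mu(a)|f(a)g(a)\sum_{bc\le T/a}f^2(b)g^2(c)$, where I read $f^2(b)=f(b)^2$ and $g^2(c)=g(c)^2$; these coincide with $f(b^2)$ and $g(c^2)$ by complete multiplicativity.

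To prove the pointwise identity I would use the same parametrization of divisors of $q^2$ already exploited in the proof of Proposition~\ref{p:Ncusp}: every $d\mid q^2$ is written uniquely as $d=ab^2$ with $a$ squarefree, by letting $a$ collect the primes dividing $d$ to an odd power and $b^2$ the even remainder. I would then check that this forces $ab\mid q$, so that $c:=q/(ab)$ is a positive integer with $abc=q$; conversely, any triple $(a,b,c)$ with $abc=q$ and $a$ squarefree yields a divisor $d=ab^2\mid q^2$. This exhibits a bijection $d\leftrightarrow(a,b,c)$. Under it the complementary divisor is $q^2/d=q^2/(ab^2)=ac^2$, so complete multiplicativity of $f$ and $g$ gives $f(d)g(q^2/d)=f(ab^2)g(ac^2)=f(a)g(a)f(b)^2g(c)^2$, which is precisely the summand on the right, proving the identity.

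The main obstacle, and the only point requiring genuine care, is verifying that $ab\mid q$ for every divisor $d\mid q^2$ — equivalently, that the map $d\mapsto(a,b,c)$ really lands in (and covers) the triples with $abc=q$. This is a prime-by-prime check: for $q=\prod_p p^{e_p}$ and $d=\prod_p p^{\alpha_p}$ with $0\le\alpha_p\le 2e_p$, one has $a=\prod_{\alpha_p\ \mathrm{odd}}p$ and $b=\prod_p p^{\lfloor\alpha_p/2\rfloor}$, so the $p$-adic valuation of $ab$ equals $\lceil\alpha_p/2\rceil$, and $\lceil\alpha_p/2\rceil\le e_p$ follows from $\alpha_p\le 2e_p$. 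Everything else is routine bookkeeping. As an independent sanity check, one can instead prove the pointwise identity by multiplicativity: both sides are multiplicative functions of $q$, and for a prime power $q=p^e$ each reduces to $\sum_{j=0}^{2e}f(p)^j g(p)^{2e-j}$.
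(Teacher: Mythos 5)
Your proof is correct and follows essentially the same route as the paper's: both rest on writing each divisor $d\mid q^2$ uniquely as $d=ab^2$ with $a$ squarefree, observing that $d\mid q^2$ is equivalent to $ab\mid q$, substituting $q=abc$, and invoking complete multiplicativity. The only difference is organizational (you establish the identity pointwise in $q$ before summing, and you spell out the prime-by-prime verification of $ab\mid q$, which the paper merely asserts).
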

\begin{proof}
First for two functions $f,g$ we have
\begin{eqnarray*}
\sum_{q\leq T}h(q^2)&=& \sum_{q\leq T}\sum_{d|q^2}f(d)g(\frac{q^2}{d})\\
&=& \sum_{d\leq T^2}\mathop{\sum_{q\leq T}}_{d|q^2}f(d)g(\frac{q^2}{d})
\end{eqnarray*}
Now write any divisor  $d=ab^2$ with $a$ square free and $b\in \N$, and note that the condition $d|q^2$ is equivalent to the condition that $ab|q$. We can thus write in the inner sum $q=abc$  to get 
\begin{eqnarray*}
\sum_{q\leq T}h(q^2)&=&\sum_{a\leq T}|\mu(a)|\sum_{bc\leq \frac{T}{a}} f(ab^2)g(ac^2)
\end{eqnarray*}
Finally using that $f,g$ are completely multiplicative we get 
\begin{eqnarray*}
\sum_{q\leq T}h(q^2)&=&\sum_{a\leq T} |\mu(a)| f(a)g(a)\sum_{bc\leq \frac{T}{a}} f(b^2)g(c^2)
\end{eqnarray*}
as claimed.
\end{proof}
\begin{prop}\label{p:SkT}
Let $k\geq 1$ and let $T\geq 1$. If $\chi_1$ is real valued we have
$$\cS_k(\chi_1,\chi_2,T)=\tfrac{\phi(N_1)}{(2k+1)N_1}\tfrac{L(\chi_2^2, 2k+1) L(\chi_1\chi_2, k+1)}{L(\chi_1^2\chi_2^2, 2k+2)}T^{2k+1}+O(T^{2k}+T^2\log(T)),$$
Otherwise, $S_k(\chi_1,\chi_2,T)=O(T^{2k}+T^2\log(T))$.
\end{prop}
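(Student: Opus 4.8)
The plan is to feed the twisted divisor sum into Lemma \ref{l:sumsquares}. Setting $f(d)=\chi_1(d)d^k$ and $g(d)=\chi_2(d)$, both of which are completely multiplicative, the Dirichlet convolution $h(m)=\sum_{d|m}f(d)g(m/d)$ is exactly $\sigma_k(\chi_1,\chi_2,m)$. The lemma then gives
$$\cS_k(\chi_1,\chi_2,T)=\sum_{a\le T}|\mu(a)|(\chi_1\chi_2)(a)a^k\sum_{bc\le T/a}\chi_1^2(b)b^{2k}\chi_2^2(c),$$
using $f(a)g(a)=(\chi_1\chi_2)(a)a^k$, $f^2(b)=\chi_1^2(b)b^{2k}$, and $g^2(c)=\chi_2^2(c)$. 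Everything then reduces to an asymptotic analysis of the inner double sum $\Sigma(X)=\sum_{bc\le X}\chi_1^2(b)b^{2k}\chi_2^2(c)$, followed by a summation over $a$ with $X=T/a$.

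For the inner sum I would sum over $b$ first (for fixed $c$, with $b\le X/c$), since the weight $b^{2k}$ produces the leading polynomial growth, and leave the $c$-sum for last so that it converges to a Dirichlet $L$-value. The crucial dichotomy is that $\chi_1$ is real-valued if and only if $\chi_1^2$ is the principal character modulo $N_1$. When $\chi_1$ is real, $\chi_1^2(b)$ is the indicator of $(b,N_1)=1$, and expanding the coprimality condition via $\sum_{d|N_1}\mu(d)$ together with Euler--Maclaurin gives $\sum_{b\le Y,\,(b,N_1)=1}b^{2k}=\tfrac{\phi(N_1)}{N_1(2k+1)}Y^{2k+1}+O(Y^{2k})$; when $\chi_1$ is not real, $\chi_1^2$ is non-principal, its partial character sums are bounded, and Abel summation yields $\sum_{b\le Y}\chi_1^2(b)b^{2k}=O(Y^{2k})$ with no main term. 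Substituting with $Y=X/c$ and then using $\sum_{c\le X}\chi_2^2(c)c^{-(2k+1)}=L(\chi_2^2,2k+1)+O(X^{-2k})$, I obtain in the real case
$$\Sigma(X)=\tfrac{\phi(N_1)L(\chi_2^2,2k+1)}{N_1(2k+1)}X^{2k+1}+O(X^{2k}),$$
where the convergence of $\sum_c c^{-2k}$ for $k\ge1$ absorbs the secondary errors, and $\Sigma(X)=O(X^{2k})$ otherwise.

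It remains to carry out the outer summation. In the real case the leading term of $\Sigma(T/a)$ contributes
$$\tfrac{\phi(N_1)L(\chi_2^2,2k+1)}{N_1(2k+1)}T^{2k+1}\sum_{a\le T}\frac{|\mu(a)|(\chi_1\chi_2)(a)}{a^{k+1}},$$
and I would evaluate the final sum using the identity $\sum_n|\mu(n)|\chi(n)n^{-s}=L(\chi,s)/L(\chi^2,2s)$ from the preliminaries, with $\chi=\chi_1\chi_2$ and $s=k+1$, up to a tail of size $O(T^{-k})$. This produces precisely the claimed factor $\tfrac{L(\chi_2^2,2k+1)L(\chi_1\chi_2,k+1)}{L(\chi_1^2\chi_2^2,2k+2)}$ against the prefactor $\tfrac{\phi(N_1)}{(2k+1)N_1}$. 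The remaining pieces contribute to the error: the tail of this $a$-sum gives $O(T^{k+1})$, and the inner error $O((T/a)^{2k})$ summed against $|\mu(a)|a^k$ gives $T^{2k}\sum_{a\le T}|\mu(a)||\chi_1\chi_2(a)|a^{-k}$. The non-real case is identical but retains only the second of these, since there is no main term.

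The main thing to watch is not a deep obstacle but the bookkeeping of the error terms, which all turn out polynomially smaller except at the boundary exponent $k=1$: there $\sum_{a\le T}a^{-1}\ll\log T$ forces the extra $T^2\log T$, whereas for $k\ge2$ the sum $\sum_a a^{-k}$ converges and the whole error collapses to $O(T^{2k})$, giving the stated $O(T^{2k}+T^2\log T)$ in both cases. I would also note that the hypothesis $k\ge1$ is exactly what is needed for the two convergence inputs, namely $\sum_c c^{-2k}<\infty$ and the $O(X^{-2k})$ tail of the $c$-series, which is why the statement requires $k\ge1$.
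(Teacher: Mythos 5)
Your proposal is correct and follows essentially the same route as the paper: both feed $f(d)=\chi_1(d)d^k$, $g(d)=\chi_2(d)$ into Lemma~\ref{l:sumsquares}, reduce to the inner sum $\sum_{bc\le X}\chi_1^2(b)b^{2k}\chi_2^2(c)$ with $X=T/a$, and then sum over $a$ using the identity $\sum_n|\mu(n)|\chi(n)n^{-s}=L(\chi,s)/L(\chi^2,2s)$, with the same dichotomy on whether $\chi_1^2$ is principal and the same bookkeeping producing the extra $\log T$ only at $k=1$. The only (immaterial) difference is that you estimate the inner sum by summing over $b$ first for every $c\le X$, whereas the paper uses the Dirichlet hyperbola method; since the weight $b^{2k}$ makes the $b$-range dominant, both yield the same main term and the same $O(X^{2k})$ error.
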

\begin{proof}
Applying \lemref{l:sumsquares}  with $f(m)=\chi_1(m)m^k$ and $g(m)=\chi_2(m)$ we have 
\begin{eqnarray*}
\sum_{q\leq T} \sigma_k(\chi_1,\chi_2,q^2)&=&\sum_{a\leq T}|\mu(a)| \chi_1(a)\chi_2(a)a^k \sum_{bc\leq \frac{T}{a}} \chi_1^2(b)\chi_2^2(c)b^{2k}\\
&=& \sum_{a\leq T}| \mu(a)| \chi_1(a)\chi_2(a)a^k S(\frac{T}{a}) 
\end{eqnarray*}
where
$$S(X)= \sum_{bc\leq X} \chi_1^2(b)\chi_2^2(c)b^{2k}.$$
Using Dirichlet hyperbola method we split the sum $S(X)=S_1(X)+S_2(X)-S_3(X)$ in to terms with $c\leq \sqrt{X}$ terms with $b\leq \sqrt{X}$ and subtract the overcount of terms when both $b,c\leq \sqrt{X}$. We  estimate each one separately.
The main contribution comes from $S_1(X)$ when $\chi_1^2$ is principal, in which case
\begin{eqnarray*}
S_1(X) &=& \sum_{c\leq \sqrt{X}}\chi_2^2(c)\mathop{\sum_{b\leq \frac{X}{c}}}_{(b,N_1)=1}b^{2k}\\
&=&\frac{\phi(N_1)}{N_1} \frac{X^{2k+1}}{2k+1} \sum_{c\leq \sqrt{X}}\frac{\chi_2^2(c)}{c^{2k+1}}+O(X^{2k})\\
&=&   \frac{\phi(N_1)  L(\chi_2^2, 2k+1)}{(2k+1)N_1}X^{2k+1}+O(X^{2k}) \end{eqnarray*}
where $\phi(N_1)=\#\{1\leq b\leq N_1: (b,N_1)=1\}$ is Eulers totient function.
Note that when $\chi_1^2$ is not principal we can bound 
\begin{eqnarray*}
S_1(X) &=& \sum_{c\leq \sqrt{X}}\chi_2^2(c)\sum_{b\leq \frac{X}{c}} \chi_1^2(b) b^{2k}
\ll X^{2k}\\ \end{eqnarray*}

The contribution of the other two sums can be trivially bounded by 
\begin{eqnarray*}
S_2(X) &\leq &\sum_{b\leq \sqrt{X}} b^{2k}\sum_{c\leq \frac{X}{b}} \chi_2^2(c)\\
&=&X \sum_{b\leq \sqrt{X}} b^{2k-1}\ll X^{k+1}\\
\end{eqnarray*}
and 
\begin{eqnarray*}
S_3(X) &\leq & \sum_{b,c\leq \sqrt{X}}b^{2k}\ll X^{k+1}
\end{eqnarray*}
We thus  get that for any $k\geq 1$,
$$S(X)= \left\lbrace\begin{array}{cc}   \frac{\phi(N_1)  L(\chi_2^2, 2k+1)}{(2k+1)N_1} X^{2k+1}+O(X^{2k}) & \chi_1^2=1\\
O(X^{2k}) & \chi_1^2\neq 1
\end{array}\right.$$

Plugging this back, when $\chi_1$ is principal or quadratic
\begin{eqnarray*}
\sum_{q\leq T} \sigma_k(\chi_1,\chi_2,q^2)&=& \sum_{a\leq T}|\mu(a)| \chi_1(a)\chi_2(a)a^k S(\frac{T}{a})\\
&=&    \frac{\phi(N_1)  L(\chi_2^2, 2k+1)}{(2k+1)N_1}T^{2k+1} \sum_{a\leq T}|\mu(a)| \frac{\chi_1\chi_2(a)}{a^{k+1}} +O(T^{2k}\sum_{a\leq T} \frac{1}{a^k})\\
&=&  \frac{\phi(N_1)}{(2k+1)N_1} \frac{L(\chi_2^2, 2k+1) L(\chi_1\chi_2, k+1)}{L(\chi_1^2\chi_2^2, 2k+2)}T^{2k+1}+O(T^{2k}\log(T))
\end{eqnarray*}
where the $\log(T)$ factor is only needed when $k=1$. When $\chi_1^2$ is not principal, the main term cancels and the whole sum is bounded by $O(T^{2k})$ (resp. $O(T^2\log(T))$ if $k=1$).

\end{proof}

Next we estimate the sum of the modified divisor function.
\begin{prop}\label{p:BkT}
For any $k\in \frac{1}{2}+\N$, $k\geq 5/2$ let $s_k=(-1)^{k-1/2}$.  For any $T\geq 1$ large we have
$$\cB_k(T)= \frac{\zeta_2(2k-1)}{(4k-2)L_2(\omega_{s_k},k+\frac{1}{2})}T^{2k-1}+O(T^{2k-2}),$$
\end{prop}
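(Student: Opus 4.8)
The plan is to open up the divisor structure of $\beta_k$ into a threefold convolution, exploit an exact cancellation of the exponents, and then sum one variable at a time in the same spirit as Proposition~\ref{p:SkT}. First I would note that the inner sum in \eqref{e:divisor2} runs over pairs $(a,b)$ with $ab\mid m$, hence over triples $(a,b,\ell)$ with $ab\ell=m$. Since $\cB_k(T)$ only sees odd $m$, all of $a,b,\ell$ are odd, and by \eqref{e:omegam} (using that $\omega_{s_k}$ has modulus dividing $4$) we have $\left(\tfrac{s_k}{a}\right)=\omega_{s_k}(a)=:\chi(a)$ for odd $a$. Thus
$$\beta_k(m)=\sum_{ab\ell=m}\mu(a)\left(\tfrac{s_k}{a}\right)a^{1/2-k}b^{2-2k}(ab\ell)^{2k-2}=\sum_{ab\ell=m}\mu(a)\chi(a)\,a^{k-3/2}\ell^{2k-2}.$$
The decisive point is the cancellation in $b$: the weight $b^{2-2k}$ from the divisor sum is exactly annihilated by the factor $b^{2k-2}$ coming from $m^{2k-2}$, so $b$ survives only as a free counting variable, while $a$ acquires exponent $\tfrac12-k+(2k-2)=k-\tfrac32$. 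Summing over odd $m\le T$ gives $\cB_k(T)=\sum_{ab\ell\le T,\ \mathrm{odd}}\mu(a)\chi(a)a^{k-3/2}\ell^{2k-2}$.

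Next I would sum over the innermost variable $\ell$, which carries the dominant weight $\ell^{2k-2}$. Using the elementary estimate $\sum_{\ell\le X,\ \mathrm{odd}}\ell^{2k-2}=\frac{X^{2k-1}}{2(2k-1)}+O(X^{2k-2})$ with $X=T/(ab)$, the main term becomes
$$\frac{T^{2k-1}}{2(2k-1)}\sum_{\substack{ab\le T\\ \mathrm{odd}}}\mu(a)\chi(a)\,a^{-(k+1/2)}b^{-(2k-1)},$$
while the error contributes $O\!\big(T^{2k-2}\sum_{a,b\ \mathrm{odd}}a^{-(k-1/2)}b^{-(2k-2)}\big)=O(T^{2k-2})$, the two series converging because $k\ge\tfrac52$ forces $k-\tfrac12\ge2$ and $2k-2\ge3$. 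Finally I would extend the sums over $a$ and $b$ to all odd integers and identify the resulting Euler products via the identities in the Dirichlet $L$-value subsection: $\sum_{a\ \mathrm{odd}}\mu(a)\chi(a)a^{-(k+1/2)}=L_2(\omega_{s_k},k+\tfrac12)^{-1}$ and $\sum_{b\ \mathrm{odd}}b^{-(2k-1)}=\zeta_2(2k-1)$. Since $2(2k-1)=4k-2$, this produces precisely the stated main term $\frac{\zeta_2(2k-1)}{(4k-2)L_2(\omega_{s_k},k+\frac12)}T^{2k-1}$.

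The main obstacle will be controlling the tail incurred when replacing the truncated sum $\sum_{ab\le T}$ by the full Euler products, i.e.\ checking that $T^{2k-1}\sum_{ab>T}a^{-(k+1/2)}b^{-(2k-1)}=O(T^{2k-2})$. Splitting according to whether $a>\sqrt T$ or $b>\sqrt T$, the $b$-heavy range contributes $O(T^{k})$, harmless for $k\ge2$, but the $a$-heavy range contributes a term of size $T^{3k/2-3/4}$, and $3k/2-\tfrac34\le 2k-2$ holds exactly when $k\ge\tfrac52$, with equality of exponents at $k=\tfrac52$. This borderline estimate is the only genuinely delicate point; everything else is routine bookkeeping, and it is precisely what ties the hypothesis $k\ge\tfrac52$ to the claimed error $O(T^{2k-2})$.
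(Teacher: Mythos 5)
Your proposal is correct and follows essentially the same route as the paper: you open $\beta_k(m)$ into the triple convolution $ab\ell=m$, exploit the exact cancellation of the $b$-exponent, sum the heavy variable $\ell$ first, and complete the remaining sums to the Euler products $\zeta_2(2k-1)$ and $L_2(\omega_{s_k},k+\tfrac12)^{-1}$, exactly as in the paper's proof (which organizes the inner $(b,\ell)$-sum via the Dirichlet hyperbola method rather than your direct summation plus tail estimate, a purely cosmetic difference). Your tail analysis correctly pins down $k\ge\tfrac52$ as the borderline for the $O(T^{2k-2})$ error, matching the hypothesis of the proposition.
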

\begin{proof}
Plugging in the formula for $\beta_k(m)$ and changing the order of summation we can write
\begin{eqnarray*}
\cB_k(T)&=&\mathop{ \sum_{m\leq T}}_{\rm{ odd}}m^{2k-2}\sum_{ab| m}\mu(a)\left(\frac{s_k}{a}\right) a^{1/2-k}b^{2-2k}\\
&=& \mathop{\sum'_{a\leq T} \mu(a)\left(\frac{s_k}{a}\right)a^{k-3/2}\sum'_{mb\leq \frac{T}{a}}}m^{2k-2}
\end{eqnarray*}
where $\sum'$ indicates we are only summing over odd integers.
We now use Dirichlet hyperbola method to estimate the inner sum 
$$S(X)=\sum_{mb\leq X}' m^{2k-2}=S_1(X)+S_2(X)-S_3(X)$$
where as before the first sum is over $m,b$ with $b\leq \sqrt{X}$, the second is over $m,b$ with $b\leq \sqrt{X}$ and the third the over cound where both $m\leq \sqrt{X}$ and $b\leq\sqrt{X}$. Now
\begin{eqnarray*}
S_1(X)&=&\sum'_{b\leq \sqrt{X}}\sum'_{m\leq \frac{X}{b}}m^{2k-2}
= \frac{\zeta_2(2k-1)}{4k-2} X^{2k-1}+O(X^{2k-2})\\
\end{eqnarray*}
The other two sums are trivially bounded by 
$S_2(X)\leq \sum_{m\leq \sqrt{X}}m^{2k-2} \frac{X}{m}\leq  X^k$ and  similarly $|S_3(X)|\leq X^k$.
Plugging this back in we get that 
\begin{eqnarray*}
\cB_k(T)&=&\sum'_{a\leq T} \mu(a)\left(\frac{s_k}{a}\right)a^{k-3/2}S(\frac{T}{a})\\
&=&   \frac{\zeta_2(2k-1)}{4k-2}T^{2k-1} \sum'_{a\leq T} \mu(a)\left(\frac{s_k}{a}\right)a^{-(k+1/2)}+O(T^{2k-2})\\
&=&  \frac{\zeta_2(2k-1)}{(4k-2)L_2((\frac{s_k}{\cdot}),k+\frac{1}{2})}T^{2k-1}+O(T^{2k-2})\\
\end{eqnarray*}

\end{proof}

\subsection{Proof of \thmref{t:main}}
We first consider the case of odd $n\geq 5$. In this case  $k=\frac{n+1}{2}$ is integral and $\Theta_{n+1}(z)\in M_k(4,\chi)$ with $\chi$ either trivial or $\chi=\omega_{-1}$ quadratic (depending on parity of $k=\frac{n+1}{2}$). We further note that when $n=3$ there are no cusp forms and $\Theta_3(z)\in \cE_2(4)$ which is one dimensional so $r_3(n)$ is a twisted divisor sum.
For $n\geq 5$, writing $\Theta_{n+1}(z)=E(z)+f(z)$ as a sum of Eisenstein series and cusp form and using \propref{p:Ncusp} to bound $N_k(f;T)\ll_f T^k\log^2(T)\ll T^{n-1}$  it is enough to estimate 
$N_k(E,T)$. Writing $E(z)$ as a sum of Eisenstein series having Fourier coefficients given by the twisted divisor sums, we can write $N_k(E,T)$ as a sum of finitely many sums, each a multiple of $S_{k-1}(\chi_1,\chi_2,T)$ with $\chi_1,\chi_2$ real characters. We then use \propref{p:SkT} to estimate these sums to get that
$$N_k(E,T)=c(E)T^n+O(T^{n-1}+T^2\log(T)),$$
with $c(E)$ is given by ratios of special values of Dirichlet $L$-functions.
Hence the same estimates hold for $N_k(\Theta_{n+1};T)$ and by \lemref{l:N2N*} we have  $N(S^n;T)=c_nT^n+O(T^{n-1}+T^2\log(T)$, with $c_n=\frac{c(E)}{\zeta(n)}$.

Next, for even  $n\geq 4$ we have that $k=\frac{n+1}{2}\geq \frac{5}{2}$ is a half integer. As before we can write  $\Theta_{n+1}(z)=E(z)+f(z)$ as a sum of Eisenstein series and cusp form and using \propref{p:Ncusp} to bound $N_k(f;T)\ll_f T^{n-1}$ it is enough to estimate  $N_k(E,T)$. From our discussion in section \secref{s:Eisenstein} we know that $E(z)\in \cM_k(4)$ has Fourier coefficients, $\lambda(n)$, satisfying that,  $\lambda(2^{2\nu}m^2)=c_k  2^{2\nu(k-1)}A_k(\nu)\beta_k(m)$ for any odd $m$, where $|A_k(\nu)|\ll 1$ is uniformly bounded. We thus get that 
\begin{eqnarray*}
N_k(E,T)&=&c_k\sum_{\nu\leq \log(T)} 2^{2\nu(k-1)}A_k(\nu) \sum_{m\leq \frac{T}{2^\nu}} \beta_k(m)\\
&=& c_k\sum_{\nu\leq \log(T)} 2^{2\nu(k-1)}A_k(\nu) \cB_k(\frac{T}{2^\nu})
\end{eqnarray*}
Using \propref{p:BkT}, we can estimate
$\cB_k(2^{-\nu}T)=\tilde c_k 2^{(1-2k)\nu}T^{2k-1}+O(2^{(2-2k)\nu}T^{2k-2}),$
to get that
\begin{eqnarray*}
N_k(E,T)&=& c_k\tilde{c}_kT^{2k-1}\sum_{\nu\leq \log(T)} 2^{-\nu}A_k(\nu)+O(T^{2k-2}\log(T))\\
&=& c(E)T^{2k-1}+O(T^{2k-2}\log(T))
\end{eqnarray*}
where $c(E)=c_k\tilde{c}_k\sum_{\nu=1}^\infty \frac{A_k(\nu)}{2^\nu}$.
Since $2k-1=n$ we get that in this case
$$N_k(\Theta_{n+1},T)=  N_k(E,T)+N_k(f,T)= c(E)T^{n}+O(T^{n-1}\log(T))$$
and hence also $N(S^n;T)=c_nT^n+O(T^{n-1}\log(T))$ as claimed. 

Finally we treat the case of $n=2$ separately. Here, instead of relying on modular forms we can use a formula of Hurwitz  (see \cite[Page 3]{Duke2003}) for the series
$$\sum_{q=1}^\infty \frac{r_3(q^2)}{q^s}=6(a-2^{1-s})\frac{\zeta(s)\zeta(s-1)}{L(\omega_{-1},s)},$$
to deduce that 
$$r_3(q^2)=6\sum_{abc=q}\mu(c)\omega_{-1}(c)\chi_0(b)b,$$
where $\chi_0(b)=1$ for $b$ odd and vanishes for even $b$. We thus need to estimate the sum 
\begin{eqnarray*}
N_{3/2}(\Theta_3;T)&=&6\sum_{q\leq T}r_3(q^2)\\
&=& 6\sum_{c\leq T}\mu(c)\omega_{-1}(c)\sum_{ab\leq \frac{T}{c}} \chi_0(b)b.\\
\end{eqnarray*}
We now estimate the inner sum 
\begin{eqnarray*}
S(X)=\sum_{ab\leq X} \chi_0(b)b=S_1(X)+S_2(X)-S_3(X)
\end{eqnarray*}
where as before the first sum is over $a\leq \sqrt{X}$ the second is over $b\leq \sqrt{X}$ and the third is the correction subtracting overcount of $a,b\leq \sqrt{X}$.
First
\begin{eqnarray*}
S_1(X)&=&\sum_{a\leq \sqrt{X}}\sum_{b\leq \frac{X}{a}}\chi_0(b)b\\
&=& \sum_{a\leq \sqrt{X}}(\frac{X^2}{4a^2}+O(\frac{X}{a}))\\
&=& \frac{\zeta(2)}{4}X^2-\frac{X^2}{4}\sum_{a>\sqrt{X}}\frac{1}{a^2}+O(X\log(X))\\
&=&  \frac{\zeta(2)}{4}X^2-\frac{X^{3/2}}{4}+O(X\log(X))\\
\end{eqnarray*}
where we used that  $\sum_{a\geq \sqrt{X}}\frac{1}{a^2}=\frac{1}{\sqrt{X}}+O(\frac{1}{X})$. Next we estimate
\begin{eqnarray*}
S_2(X)
&=& \sum_{b\leq \sqrt{X}}\chi_0(b)b(\frac{X}{b}+O(1))
=  \frac{X^{3/2}}{2}+O(X),
\end{eqnarray*}
and 
\begin{eqnarray*}
S_3(X)&=&\sum_{b\leq \sqrt{X}}\chi_0(b)b\sum_{a\leq \sqrt{X}}1= (\frac{X}{4}+O(\sqrt{X}))(\sqrt{X}+O(1))=\frac{X^{3/2}}{4}+O(X).
\end{eqnarray*}
Combining the three estimates, noting that all terms of order $X^{3/2}$ cancel out we conclude that 
$$S(X)=S_1(X)+S_2(X)-S_3(X)= \frac{\zeta(2)}{4}X^2+O(X\log(X)),$$
and plugging this back we get that 
\begin{eqnarray*}
N_{3/2}(\Theta_3;T)&=& 6\sum_{c\leq T} \mu(c)\omega_{-1}(c)S(\frac{T}{c})\\
&=&  \frac{3\zeta(2)}{2}T^2\sum_{c\leq T}\frac{\mu(c)\omega_{-1}(c)}{c^2}+O(T\log^2(T))\\
&=&  \frac{3\zeta(2)}{2 L(\omega_{-1},2)}T^2+O(T\log^2(T))).\\
\end{eqnarray*}
From this estimate we can use \lemref{l:N2N*} to conclude that 
$$N(S^2;T)=c_2T^2+O(T\log^3(T)),$$
as claimed. 

\subsection{Counting in a subset of $S^n$}\label{s:NOmega}

In order to estimate $N(\Omega;T)=\#\{\frac{\bm{p}}{q}\in \Omega: q\leq T\}$ for a nice set $\Omega\subseteq S^n$, we can express the indicator function of $\Omega$ as a sum of harmonic polynomials on the sphere, reducing the problem to estimating sums of the form 
$$N(P;T)=\mathop{\sum_{\frac{\bm{p}}{q}\in S^n}}_{q\leq T} P(\tfrac{\bm{p}}{q}),$$
with $P(x)$ a homogenous polynomial in $n+1$ variables. We note that if $\nu=\deg(P)$ then the corresponding Theta function 
$$\Theta_{n+1}(P;z)=\sum_{\bm{v}\in \Z^{n+1}} P(\bm{v}) e^{2\pi i \|v\|^2 z}=\sum_{m=1}^\infty r_{n+1}(P;m)e^{2\pi i mz},$$
with $r_{n+1}(P;m)=\sum_{\|\bm{v}\|^2=m}P(\bm{v})$, is a  modular form of weight $k+\nu$ and is a cusp form when $\nu\neq 0$ \cite[Corollary 10.7]{Iwaniec1997book}.
Since we can rewrite 
$$N(P;T)=\sum_{d=1}^\infty \mu(d)\sum_{q\leq \frac{T}{d}}\frac{r_{n+1}(P;q^2)}{q^\nu},$$
 using \propref{p:Ncusp} and summation by parts gives the bound
$$N(P;T)\ll_\nu \| \Theta_{n+1}(P; \cdot)\|_2 T^{\frac{n+1}{2}}\log(T).$$
This estimate together with the same argument given in \cite[Lemma 5.1]{BurrinGrobner2024} gives the bound
$$N(P;T)\ll_\nu \|P|_{S^n}\|_\infty  T^{\frac{n+1}{2}}\log(T).$$
Now, using this bound and following the same argument as in \cite{BurrinGrobner2024} it is possible to get an asymptotic estimate for $N(\Omega;T)$.
However, this bound is not as good as the bound 
$$N(P;T)\ll \| P|_{S^n} \|_\infty  T^{\frac{n}{2}+\epsilon},$$
already given in \cite[Theorem 8.4]{BurrinGrobner2024} using contour integration of $L$-functions.

\subsection{More general ellipsoids}\label{s:SQ}
The method outlined above works for counting rational points on more general ellipsoids $S_Q$ described in Remark \ref{r:SQ}. To do this we note that the Theta function 
$$\Theta_Q(z)=\sum_{\bm{v}\in \Z^{n+1}}e^{2\pi i z Q(\bm{v})}$$
is a modular form in $\Theta_Q\in M_{\frac{n+1}{2}}(N,\chi)$. We can thus use the decomposition of $\Theta_Q=E+f$ as a sum of Eisenstein series and cusp form. The same argument can be used to bound $N_k(f;T)$ and one can use the explicit description of Fourier coefficients of Eisenstein series to estimate $N_k(E;T)$ in the same way (where we need $N=4D$ with $D$ odd and square free when  $k$ is a half integer). The only slight modification is when $n=3$ the space $\cS_2(N,\chi)$ might not be trivial, leading to an extra power of $\log(T)$ for the remainder. When $n=2$, if the space $\cS_{3/2}(N,\chi)$ is non trivial we get an additional term of order $O(T^{3/2})$ so our bound does not extend to general ellipsoids in this case.


\begin{thebibliography}{KY23b}
\bibitem[BO12]{BenoistOh2012}
Yves Benoist and Hee Oh.
\newblock Effective equidistribution of {$S$}-integral points on symmetric varieties.
\newblock { \em Ann. Inst. Fourier (Grenoble)}, 62(5):1889--1942, 2012.

\bibitem[BG24]{BurrinGrobner2024}
Claire Burrin and Matthias Gröbner.
\newblock Rational points on ellipsoids and modular forms, 2024.

\bibitem[CS17]{CohenStromberg2017}
Henri Cohen and Fredrik Str\"{o}mberg.
\newblock {\em Modular forms}, volume 179 of {\em Graduate Studies in
  Mathematics}.
\newblock American Mathematical Society, Providence, RI, 2017.
\newblock A classical approach.

\bibitem[Del74]{Deligne1974}
Pierre Deligne.
\newblock La conjecture de {W}eil. {I}.
\newblock {\em Inst. Hautes \'Etudes Sci. Publ. Math.}, (43):273--307, 1974.

\bibitem[DRS93]{DukeRudnickSarnak1993}
W.~Duke, Z.~Rudnick, and P.~Sarnak.
\newblock Density of integer points on affine homogeneous varieties.
\newblock {\em Duke Math. J.}, 71(1):143--179, 1993.

\bibitem[Duk03]{Duke2003}
W.~Duke.
\newblock Rational points on the sphere.
\newblock volume~7, pages 235--239. 2003.
\newblock Rankin memorial issues.

\bibitem[Get18]{Getz2018}
J.~R. Getz.
\newblock Secondary terms in asymptotics for the number of zeros of quadratic
  forms over number fields.
\newblock {\em J. Lond. Math. Soc. (2)}, 98(2):275--305, 2018.

\bibitem[GN12]{GorodnikNevo2012}
A.~Gorodnik and A.~Nevo.
\newblock Counting lattice points.
\newblock {\em J. Reine Angew. Math.}, 663:127--176, 2012.

\bibitem[HB96]{HeathBrown1996}
D.~R. Heath-Brown.
\newblock A new form of the circle method, and its application to quadratic
  forms.
\newblock {\em J. Reine Angew. Math.}, 481:149--206, 1996.

\bibitem[Iwa97]{Iwaniec1997book}
Henryk Iwaniec.
\newblock {\em Topics in classical automorphic forms}.
\newblock American Mathematical Society, Providence, RI, 1997.

\bibitem[KY23a]{KelmerYu2023a}
Dubi Kelmer and Shucheng Yu.
\newblock Fourier expansion of light-cone {E}isenstein series.
\newblock {\em J. Lond. Math. Soc. (2)}, 108(6):2175--2247, 2023.

\bibitem[KY23b]{KelmerYu2023b}
Dubi Kelmer and Shucheng Yu.
\newblock Second moment of the light-cone {S}iegel transform and applications.
\newblock {\em Adv. Math.}, 432:Paper No. 109270, 70, 2023.

\bibitem[Shi73]{Shimura1973}
Goro Shimura.
\newblock On modular forms of half integral weight.
\newblock {\em Ann. of Math. (2)}, 97:440--481, 1973.

\bibitem[WP12]{WangPei2012}
Xueli Wang and Dingyi Pei.
\newblock {\em Modular forms with integral and half-integral weights}.
\newblock Science Press Beijing, Beijing; Springer, Heidelberg, 2012.

\end{thebibliography}

\end{document}